\definecolor{e-mail}{rgb}{0,.40,.80}
\definecolor{reference}{rgb}{.20,.60,.22}
\definecolor{citation}{rgb}{0,.40,.80}
\newtheorem{theorem}{Theorem}[section]
\theoremstyle{remark}
\theoremstyle{definition}
\newtheorem{definition}[theorem]{Definition}
\newtheorem{example}[theorem]{Example}
\numberwithin{equation}{section}
\def\beq{\begin{equation}}
\def\eeq{\end{equation}}
\definecolor{todo}{rgb}{1,0,0}
\definecolor{bl}{rgb}{0,0,1}
\def\Q{{\mathbb Q}}
\def\I{{\mathbb I}}
\def\U{\mathcal{U}}
\def\wt{\widetilde}
\def\cC{{\mathcal C}}
\def\cS{{\mathcal S}}
\def\ol{\overline}
\def\Hom{\operatorname{Hom}}
\def\GL{\operatorname{GL}}
\def\SL{\operatorname{SL}}
\def\Mat{\operatorname{Mat}}
\def\Ker{\operatorname{Ker}}
\def\Lie{\operatorname{Lie}}
\def\Ga{\bold{G}_a}
\def\Gm{\bold{G}_m}
\renewcommand{\ker}{\mathrm{Ker}}
\begin{document}
\title[Triviality of cohomologies of  differential algebraic groups]{Triviality of differential Galois cohomologies of linear differential algebraic groups}
\author{Andrei Minchenko}
\address{Department of Mathematics, University of Vienna, Austria}
\email{an.minchenko@gmail.com}
\urladdr{http://www.mat.univie.ac.at/~minchenko/}
\author{Alexey Ovchinnikov}
\address{CUNY Queens College, Department of Mathematics,
65-30 Kissena Blvd, Queens, NY 11367 and
CUNY Graduate Center, Ph.D. programs in Mathematics and Computer Science, 365 Fifth Avenue,
New York, NY 10016}
\email{aovchinnikov@qc.cuny.edu}
\urladdr{http://qc.edu/~aovchinnikov/}
\begin{abstract}We show that the triviality of the differential Galois cohomologies over a partial differential field $K$ of a linear differential algebraic group is equivalent to $K$ being algebraically, Picard--Vessiot, and linearly differentially closed. This former is also known to be equivalent to the uniqueness up to an isomorphism of a Picard--Vessiot extension of a linear differential equation with parameters.
\end{abstract}
\maketitle

\section{Introduction} 
Galois theory of linear differential equations with parameters \cite{cassisinger} (also known as the parameterized Picard--Vessiot theory) provides theoretical and algorithmic tools to study differential algebraic dependencies of solutions of a linear ODE with one or several parameters. A parameterized Picard--Vessiot extension is a differential field generated by a complete set of solutions of the ODE and also satisfying additional technical conditions. How to decide whether such an extension is unique is an open problem in this theory. We study this question in the present paper as follows.

Let $F$ be a differential field of characteristic zero with commuting derivations  $\{\partial_x,\partial_1,\ldots,\partial_m\}$. One can show using~\cite{GGO} that the uniqueness up to an isomorphism of a Picard--Vessiot extension of any parameterized linear differential equation  with coefficients in $F$ is equivalent to  the triviality of differential Galois (also known as constrained) cohomologies \cite{Kolchin:differentialalgebraicgroups} over $K$, the $\partial_x$-constants of $F$, of all linear differential algebraic groups \cite{Cassidy:differentialalgebraicgroups}. We show in our main result, Theorem~\ref{thm:main}, that the latter triviality holds if and only if $K$ is algebraically, Picard--Vessiot, and linearly differentially closed (the terminology is explained in Section~\ref{sec:defs}).

Such a question for $m=1$ was settled in  \cite{PillayH1}, in which case the ``linearly differentially closed'' condition does not play a role. This was extended to $m>1$ in~\cite{CP2017} in terms of generalized strongly normal extensions. Our characterization is different and our arguments can be viewed as more transparent.
\section{Definitions and notation}\label{sec:defs}
\begin{definition} A {\em differential ring} is a ring $R$ with a finite set $\Delta=\{\delta_1,\ldots,\delta_m\}$ of commuting derivations on $R$. A differential ideal of $(R,\Delta)$
is an ideal of $R$ stable under any derivation in $\Delta$. \end{definition}

 For any derivation $\delta :R\to R$, we denote $
 R^\delta= \{r \in R\:|\: \delta(r) = 0\}$,
 which is a $\delta$-subring  of $R$ and is called the {\em ring of $\delta$-constants} of $R$. If $R$ is a field and $(R,\Delta)$ is a differential ring, then $(R,\Delta)$ is called a {\em differential field}.  The notion of {\em differential algebra} over $(R,\Delta)$ is defined analogously. 
 
 \begin{definition}
 An ideal $I$ of $R$ is called a differential ideal of $(R,\Delta)$ if, for all $\delta\in\Delta$ and $r \in I$, $\delta(r)\in I$.
\end{definition}

The ring of $\Delta$-differential polynomials $K\{x_1,\ldots,x_n\}$ in the differential indeterminates  $x_1,\ldots,x_n$ and with coefficients in a $\Delta$-field $(K,\Delta)$ is the ring of polynomials in the indeterminates formally denoted $$\left\{\delta_1^{i_1}\cdot\ldots\cdot\delta_m^{i_m} x_i\:\big|\: i_1,\ldots,i_m\geqslant 0,\, 1\leqslant i\leqslant n\right\}$$ with coefficients in $K$. We endow this ring with a structure of $K$-$\Delta$-algebra by setting 
$$\delta_k \left(\delta_1^{i_1}\cdot\ldots\cdot \delta_m^{i_m} x_i \right)= \delta_1^{i_1} \cdot\ldots\cdot \delta_k^{i_k+1} \cdot\ldots\cdot \delta_m^{i_m} x_i.$$ 

\begin{definition}[{see \cite[Corollary~1.2(ii)]{Marker2000}}]
A differential field $(K,\Delta)$ is said to be {\em differentially closed} if, for every $n\geqslant 1$ and every finite set of $\Delta$-polynomials $F \subset K\{x_1,\ldots,x_n \}$, if the system of differential equations $F=0$ has a solution  with entries in some $\Delta$-field extension $L$, then it has a solution with entries in $K$.\end{definition}

Let $\U$ be a differentially closed $\Delta$-field  of characteristic $0$ and  $K\subset\U$ be its differential subfield.

\begin{definition}\label{def:Kc} A {\it Kolchin-closed}  set $W \subset \U^n$ over $K$   is the set of common zeroes
of a system of differential polynomials with coefficients in $K$, that is, there exists  $S \subset K\{y_1,\dots,y_n\}$ such that
$$
W = \left\{ a \in \U^n\:|\: f(a) = 0 \mbox{ for all } f \in S \right\}.$$
 More generally, for a differential algebra $R$ over $K$, $$
W(R) = \left\{ a \in R^n\:|\:   f(a) = 0 \mbox{ for all } f \in S \right\}.$$
\end{definition}

\begin{definition}
If $W \subset \U^n$ is a  Kolchin-closed set defined  over $K$,  the differential ideal  $$\I(W) = \{ f\in K\{y_1,  \ldots , y_n\} \ | \ f(w) = 0 \mbox{ for all } \ w\in W(\U)\}$$
is called the {\em defining ideal} of $W$ over $K$.
Conversely, for a  subset  $S$ of $K\{y_1,\dots,y_n\}$, the following subset  is Kolchin-closed in  $\U^n$ and defined over $K$:
$$
\bold{V}(S)=  \left\{ a \in \U^n\:|\: f(a)= 0 \mbox{ for all } f \in S \right\}.$$ 
\end{definition}

\begin{definition}
Let $W \subset \U^n$ be a Kolchin-closed set defined over $K$. The {\em coordinate ring} $K\{W\}$ of 
$W$ over $K$ is the differential algebra over $K$
$$
K\{W\} = K\{y_1,\ldots,y_n\}\big/\I(W).
$$
If $K\{W\}$ is an integral domain, then $W$ is said to be {\it irreducible}. This  is equivalent to $\I(W)$ being a prime  differential ideal.
\end{definition}

\begin{definition}
Let $W \subset \U^n$ be a Kolchin-closed set defined over $K$. 
Let  $\I(W) = \mathfrak{p}_1\cap\ldots\cap \mathfrak{p}_q$ be  a minimal differential prime decomposition of  $\I(W)$, that is, the $\mathfrak{p}_i \subset K\{y_1,\dots,y_n\}$ are prime differential ideals containing $ \I(W)$ and minimal with this property. This decomposition is unique  up to permutation (see \cite[Section~VII.29]{Kapldiffalg}).  The irreducible Kolchin-closed sets 
$W_i=\bold{V}(\mathfrak{p}_i)$ are defined over $K$ and  called the {\it irreducible components} of $W$. We  have $W = W_1\cup\ldots\cup W_q$. 
\end{definition}

\begin{definition}
Let $W_1 \subset \U^{n_1}$ and $W_2 \subset \U^{n_2}$ be two Kolchin-closed sets defined over $K$. 
A  {\em differential polynomial map} (morphism) defined over $K$ is a map  $$\varphi : W_1\to W_2,\quad a \mapsto \left(f_1(a),\dots,f_{n_2}(a)\right),\ \  a \in W_1\,,$$ where $f_i \in K\{x_1,\dots,x_{n_1}\}$ for all $i=1,\dots,n_2$.  

If $W_1 \subset W_2$, the inclusion map of $W_1$ in $W_2$ is a differential polynomial map. In this case, we say that $W_1$ is 
a  Kolchin-closed subset of $W_2$.
\end{definition}

Let $W$ be an irreducible Kolchin-closed set and $P \subset K\{x_1,\ldots,x_n\}$ be its defining differential ideal, which is prime. It is shown in \cite[Section~II.12]{Kol} that there exists a non-negative integer $H$ such that, for all $h\geqslant H$,
$$
\dim \big(P\cap K\big[\delta_1^{i_1}\cdot\ldots\cdot\delta_m^{i_m}x_i\:|\: 1\leqslant i\leqslant n,\ i_j\geqslant 0,\,1\leqslant j\leqslant m,\ i_1+\ldots +i_m \leqslant h\big]\big)
$$
coincides with a polynomial in $h$. The degree of this polynomial is denoted by $\tau(W)$ and called the {\em differential type} of $W$ (if $W$ is a single point and so the above polynomial is 0, we set $\tau(W)=-1$). 

\begin{example}
Let $\GL_n \subset \U^n$ be the group of $n \times n$ invertible matrices   with entries in $\U$. One can see
$\GL_n$ as a Kolchin-closed subset of $\U^{n^2} \times \U$ defined over $K$, defined by the equation $x\cdot\det(X)-1$ in $K\big\{\U^{n^2} \times \U\big\}=K\{X,x\}$, where $X$ is an $n \times n$-matrix of  differential indeterminates over $K$ and $x$ a  differential indeterminate over $K$. One can thus identify the differential coordinate ring of $\GL_n$ over $K$ with  $F\{X,1/\det(X)\}$, where $X=(x_{i,j})_{1 \leqslant i,j \leqslant n} $ is
a matrix of differential indeterminates over $K$. We also denote  the special linear group that consists of the matrices  of determinant $1$ by $\SL_n \subset \GL_n$.
\end{example}

\begin{definition}[{\cite[Chapter~II, Section~1, page~905]{Cassidy:differentialalgebraicgroups}\label{def:LDAG}}] A {\em linear differential algebraic group} (LDAG) $G \subset \U^{n^2}$ defined  over $K$
is a subgroup of   $\GL_n$ that is a Kolchin-closed set defined over $K$. If $G \subset H \subset \GL_n$ are Kolchin-closed subgroups of 
$\GL_n$, we say that $G$ is a Kolchin-closed subgroup of $H$.
\end{definition}

\begin{definition}
Let $G$ be an LDAG  defined over $F$. The irreducible component of $G$ containing the identity element $e$  is called the {\it identity component} of $G$ and denoted by $G^\circ$. The LDAG $G^\circ$ is a $\delta$-subgroup of $G$ defined over $F$. The LDAG
$G$ is said to be {\it connected} if $G = G^\circ$, which is equivalent to $G$ being an irreducible Kolchin-closed set \cite[page~906]{Cassidy:differentialalgebraicgroups}.
\end{definition}

\begin{definition}[{\cite[Definition~2.6]{CassSingerJordan}}]
Let $G$ be an LDAG over $K$. The {\em strong identity component} $G_0$ of $G$ is defined to be the smallest  differential algebraic subgroup $H$ of $G$ defined over $\U$ such that $\tau(G/H) < \tau(G)$. 

By \cite[Remark~2.7(2)]{CassSingerJordan}, $G_0$ is a normal subgroup of $G$ defined over $K$.
\end{definition}

\begin{definition}[{\cite[Definition~2.10]{CassSingerJordan}}]  An infinite  LDAG $G$ defined over $K$ is {\em almost simple} if, for any normal proper  differential algebraic subgroup $H$ of $G$ defined over $K$, we have $\tau(H) < \tau(G)$.
\end{definition}

\begin{definition} For a system $\cS$ of $\Delta$-differential equations over $K$, 
$K$ is said to be \emph{$\cS$-closed}, or \emph{closed w.r.t. $\cS$}, if the consistency of $\cS$ (i.e., the existence of a solution in $\U$) implies the existence of a solution in $K$.
\end{definition}

\begin{definition}$K$ is said to be \emph{PV closed} if, for all $r$, $1\leqslant r\leqslant m$, for all  sets $\{D_1,\ldots,D_r\}\subset K\Delta$ of commuting derivations, for all $n\geqslant 1$, and for all $A_1,\ldots,A_r\in \Mat_{n\times n}(K)$, $K$ is closed w.r.t. 
\begin{equation}\label{eq:1}\{D_i(Z)=A_i\cdot Z,\ z\cdot\det Z=1\}_{i=1}^r,
\end{equation} where $Z$ and $z$ are unknown matrices of sizes $n\times n$ and $1\times 1$, respectively (see \cite[page~51]{GO} for a coordinate-free definition).
\end{definition}

\begin{definition}$K$ is said to be \emph{$\Delta$-linearly closed}  if it is closed w.r.t. any system of linear (not necessarily homogeneous) $\Delta$-differential equations in one unknown over $K$.
\end{definition}

\begin{definition} A {\em principal homogeneous space} (PHS) over an LDAG $G$ over $K$ is a Kolchin-closed  $X$ defined over $K$ together with a differential algebraic isomorphism
$X\times G \to X\times X$ over $K$.
\end{definition}
The set of equivalence classes of PHS of $G$ over $K$ is denoted by $H^1_{\Delta}(K,G)$.
We write $H^1_{\Delta}(K,G)=\{1\}$ if all principal homogeneous spaces of $G$ are isomorphic over $K$. For example, $H^1_{\Delta}(\U,G)=\{1\}$.

\section{Main result}
\begin{theorem}\label{thm:main}
The following are equivalent:
\begin{enumerate}
\item[(1)] $K$ is algebraically closed, PV closed, and $\Delta$-linearly closed;
\item[(2)] for any linear differential algebraic group $G$, $H^1_{\Delta}(K,G)=\{1\}$.
\end{enumerate}
\end{theorem}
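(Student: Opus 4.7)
The plan is to prove the two implications separately. For $(2) \Rightarrow (1)$ the approach is direct: for each of the three closedness properties I would exhibit a specific LDAG whose PHSs encode the corresponding solvability. Given a nontrivial finite Galois extension $L/K$ with group $\Gamma$, the set of $K$-embeddings of $L$ into $\U$ is naturally a $\Gamma$-torsor over $K$ (viewing $\Gamma$ as a constant finite LDAG via its regular representation), and its triviality forces $L=K$, yielding algebraic closure. Given a consistent system \eqref{eq:1} over $K$, the Kolchin-closed set of its fundamental matrix solutions in $\GL_n(\U)$ is a PHS over $K$ of an appropriate LDAG whose triviality produces a fundamental matrix in $\GL_n(K)$, yielding PV closure. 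Given a consistent inhomogeneous linear $\Delta$-equation in one unknown over $K$, the affine solution set is a torsor over the abelian LDAG of homogeneous solutions, and triviality delivers a $K$-solution, yielding $\Delta$-linear closure.

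For $(1) \Rightarrow (2)$, the plan is to inductively reduce the triviality of $H^1_\Delta(K,G)$ to that of simpler LDAGs, using the long exact sequence of pointed sets
\[
H^1_\Delta(K,H) \to H^1_\Delta(K,G) \to H^1_\Delta(K,G/H)
\]
attached to a short exact sequence $1 \to H \to G \to G/H \to 1$ of LDAGs defined over $K$, together with twisting arguments to patch the non-abelian exactness. First, algebraic closedness of $K$ would let me descend from $G$ to the identity component $G^\circ$ via the finite quotient $G/G^\circ$. For connected $G$, I would exploit the strong identity component $G_0$: the quotient $G/G_0$ essentially behaves like a classical linear algebraic group and its torsors are controlled by PV closure, since they arise as parameterized PV data, while $G_0$ is further decomposed through its solvable radical into commutative unipotent-like pieces (handled by $\Delta$-linear closure) and almost simple semisimple pieces. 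For the almost simple pieces, I would invoke the structure theory cited before Definition~\ref{def:LDAG} (in particular the Cassidy--Singer analysis) to reduce to the torsors of a simple algebraic group over a differentially defined subfield of $\U$, and then kill these by combining PV closure and algebraic closure.

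The hardest step will be the almost simple case, since it does not directly correspond to any one of the three closedness conditions; the dévissage there must interact carefully with the classification of almost simple LDAGs and will likely require additional twisting to separate the ``constant-algebraic'' and the ``genuinely differential'' contributions before the PV and algebraic closure hypotheses can be brought to bear. A secondary technical point is that non-abelian $H^1$ is only a pointed set, so at every inductive step I must twist the normal subgroup by a representative cocycle to recover enough exactness to propagate vanishing from the ends to the middle, and I must verify that the twisted groups remain LDAGs defined over $K$ so that the inductive hypothesis still applies.
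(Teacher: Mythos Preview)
Your plan for $(2)\Rightarrow(1)$ matches the paper's argument essentially verbatim: finite Galois torsors detect algebraic closure, the fibre of the logarithmic derivative $\ell:\GL_n\to J$ detects PV closure, and the affine solution set of an inhomogeneous linear system detects $\Delta$-linear closure.

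For $(1)\Rightarrow(2)$ your outline has a real gap in the treatment of $G/G_0$. The strong identity component $G_0$ is by definition the smallest subgroup with $\tau(G/G_0)<\tau(G)$; the quotient $G/G_0$ is \emph{not} in general a classical linear algebraic group, nor do its torsors reduce to parameterized PV data. It is merely an LDAG of strictly smaller differential type. The paper handles this by an outer induction on $\tau(G)$: the inductive hypothesis disposes of $G/G_0$, and one is reduced to $G=G_0$, whence (via the Cassidy--Singer composition series) to almost simple $G$. Without that induction on $\tau$ you have no mechanism to kill $H^1_\Delta(K,G/G_0)$, and your d\'evissage stalls at the first step.

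Two smaller points. First, your worry about twisting is unnecessary here: since at every stage you will have $H^1_\Delta(K,G')=H^1_\Delta(K,G'')=\{1\}$, exactness of pointed sets already forces $H^1_\Delta(K,G)=\{1\}$ (this is exactly the ``inductive principle'' the paper invokes). Second, the paper does not pass through the solvable radical of $G_0$; instead it uses the Cassidy--Singer normal series with almost simple quotients, then splits almost simple $G$ into the non-commutative case (where Cassidy's classification gives $G\cong H^P$ for a simple algebraic $H$ over $\Q$, handled by PV closure) and the commutative case (reduced to $\Ga$ and $\Gm$ via projections and logarithmic derivatives, handled by $\Delta$-linear closure and PV closure). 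Your radical-based decomposition could perhaps be made to work, but it is not the route taken and would require its own justification in the differential-algebraic category.
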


\begin{proof}
Let us show the implication $\Longleftarrow$. If $K$ is not algebraically closed, then there exists a non-trivial Galois extension $E/K$ given by an irreducible polynomial $f$. The set $X$ of its roots is a $K$-torsor for $G=Gal(E/K)$. It is non-trivial since there are no $K$-points of $X$, that is, homomorphisms $E\to K$ over $K$. Hence,  $H^1(K,G)\cong H^1_{\Delta}(K,G)\neq\{1\}$ (with the isomorphism following from~\cite[p.~177, Theorem~4]{Kolchin:differentialalgebraicgroups}). 

Suppose that  $K$ is not PV closed. Hence, there exists a set $D=\{D_1,\ldots,D_r\}\subset K\Delta$ of commuting derivations and a system \eqref{eq:1} with no solutions in $\GL_n(K)$. 
We claim that $H^1_{\Delta}(K,\GL_n^D)\neq\{1\}$. Indeed, let 
$$
J:=\{(B_1,\ldots,B_r)\in\mathfrak{gl}_n(\U)^r\ :\ D_iB_j-D_jB_i=[B_j,B_i]\}
$$
and
$$
\ell: \GL_n\to J,\qquad x\mapsto(x^{-1}D_1(x),\ldots,x^{-1}D_r(x)).
$$  We have $\ker(\ell)=\GL_n^D$. 
Moreover, by \cite[Proposition 14, p.~26]{Kolchin:differentialalgebraicgroups}, $\ell$ is surjective. Hence, the sequence
$$
\begin{CD}
\{1\}@>>> \GL_n^D@>>> \GL_n@>\ell>> J@>>> \{0\}
\end{CD}
$$
is exact. By assumption, $\ell$ is not surjective on $K$-points. Let $(A_1,\ldots,A_n)\notin\ell(\GL_n(K))$. By \cite[p.~192, Proposition~8]{Kolchin:differentialalgebraicgroups}, $\ell^{-1}(A_1,\ldots,A_n)$ is a non-trivial torsor for~$\GL_n^D(K)$.

If  $K$ is not $\Delta$-linearly closed, then there exist a positive integer $r$, 
a $\Delta$-subgroup $B\subset\Ga^r$ defined over $K$, and a surjective $\Delta$-linear map $\Lambda: \Ga\to B$ over $K$ that is not surjective on $K$-points. By \cite[p.~192, Proposition~8]{Kolchin:differentialalgebraicgroups}, $H^1_{\Delta}(K,\Ker\Lambda)\neq\{1\}$.

Let us prove the implication $\Longrightarrow$. By \cite[p. 170, Theorem 2]{Kolchin:differentialalgebraicgroups}, given a short exact sequence
$$
\begin{CD}
\{1\}@>>> G'@>>> G@>>> G''@>>>\{1\}
\end{CD}
$$
of LDAGs over $K$ in which $G'\subset G$ is normal \cite[p.~169]{Kolchin:differentialalgebraicgroups}, 
\begin{equation}\label{eq:exactseq}
H^1_{\Delta}(K,G')=H^1_{\Delta}(K,G'')=\{1\}\implies H^1_{\Delta}(K,G)=\{1\}.
\end{equation} 
This is called the \emph{inductive principle}~\cite{PillayH1}. As in~\cite{PillayH1}, the problem reduces to the following three cases:
\begin{enumerate}
\item\label{case0} $G$ is finite;
\item\label{case1} $G\subset\Ga$;
\item\label{case2} $G=\Gm(\cC)$ --- the group of constants of $\Gm$;
\item\label{case3} $G=H^P$, where $H$ is a 
linear algebraic group (LAG) over $\Q$, $P\subset K\Delta$ is a Lie subspace, and $H^P$ is the functor of taking constant points with respect to $P$: $H^P(L):=H(L^P)$ for a $\Delta$-ring extension $L$ of $K$.  { Note that case~\eqref{case2} is included into this case, but we have separated case~\eqref{case2} for the clarity of the presentation.}
\end{enumerate}
Let us {explain} the {the reduction and then show that $H^1_{\Delta}(K,\wt{G})=1$ for any $\wt{G}$ of types~\eqref{case0}--\eqref{case3}}. The exact sequence
$$
\begin{CD}
\{1\}@>>> G^\circ@>>> G@>>> G/G^\circ@>>>\{1\}
\end{CD}
$$
reduces the problem to case~\eqref{case0} and { to showing that, for a connected~$G$, $H^1_{\Delta}(K,G)=1$}. 
To show the latter equality, let us use induction on the differential type $\tau(G)$, the case $\tau(G)=-1$ (that is, $G=\{1\}$, because $G$ is assumed to be connected) being trivial. Let $G_0\subset G$ be the strong identity component. 
 Suppose $\tau(G)\geqslant 0$. One has the following exact sequence:
$$
\begin{CD}
\{1\}@>>> G_0@>>> G@>>> G/G_0@>>>\{1\},
\end{CD}
$$ 
and $\tau(G/G_0)<\tau(G)$. By the induction, this reduces the problem to the case $G=G_0$. Moreover, by \cite[Theorem 2.27 and Remark 2.28(2)]{CassSingerJordan}, it suffices to assume that $G$ is almost simple.  
If $G$ is almost simple non-commutative, it is simple by~\cite[Theorem 3]{MinchenkoCentral}. By \cite[Theorems 9 and 17]{Cassimpl}, it corresponds to case~\eqref{case3} because $K$ is PV closed. If $G$ is commutative, $\ol{G}$ is also commutative, hence
there are $n_1,n_2\geqslant 0$ such that $\ol{G}$ is isomorphic over $K$ (recall that $K$ is algebraically closed) to a direct product of $n_1$ copies of $\Ga$ and $n_2$ copies of $\Gm$ (we will not use the almost simplicity in the commutative case). It follows by induction on $n_1+n_2$  that $H^1_{\Delta}(K,G)=1$ if $H^1_{\Delta}(K,\wt{G})=1$ for any connected Kolchin closed subgroup $\wt{G}$ of $\Ga$ or $\Gm$. Indeed, if $n_1\geqslant 1$, one has a natural projection $G\subset \ol{G}\to\Ga$ whose kernel is contained in the direct product of $n_1-1$ copies of $\Ga$ and $n_2$ copies of $\Gm$. Similarly, if $n_2\geqslant 1$, one considers a projection $G\subset\ol{G}\to\Gm$.

The case $G\subset\Gm$ reduces to $G=\Gm^{K\Delta}=\Gm(\cC)$ and case~\eqref{case1} by considering the logarithmic derivatives (defined on $\Gm$) $$\ell_i:G\to\Ga,\quad x\mapsto x^{-1}\partial_ix,$$  for $i=1,\ldots, m$ subsequently, as all infinite differential algebraic subgroups of $\Gm$ contain $\Gm^{K\Delta}$ \cite[Proposition~31]{Cassidy:differentialalgebraicgroups}. 

In case~\eqref{case0}, $H^1_{\Delta}(K,G)=\{1\}$ by \cite[p.~177, Theorem~4]{Kolchin:differentialalgebraicgroups}, because $K$ is algebraically closed. In case~\eqref{case1}, we have the following exact sequence:
$$
\begin{CD}
\{0\}@>>> G@>\iota>>\Ga @>\pi>> \Ga @>>>\{0\},
\end{CD}
$$
and we have $H^1_{\Delta}(K,G) = \{1\}$ by \cite[p.~193, Corollary~1]{Kolchin:differentialalgebraicgroups} since $K$ is linearly $\Delta$-closed.
Case~\eqref{case2} is included into case~\eqref{case3}, as noted before. 
It remains to consider case~\eqref{case3}.  
Choose a basis $\{D_1,\ldots,D_r\}$ of commuting derivations of $P$ and let 
\begin{equation}\label{eq:J}
J:=\{(B_1,\ldots,B_r)\in(\Lie H)^r\ :\ D_iB_j-D_jB_i=[B_j,B_i]\}
\end{equation}
and, since $H$ is defined over $\Q$, by \cite[p.~924, Corollary]{Cassidy:differentialalgebraicgroups}, we have:
$$
\ell: H\to J,\qquad x\mapsto(x^{-1}D_1(x),\ldots,x^{-1}D_r(x)).
$$  We have $\ker(\ell)=H^P=G$.  Let $B_1,\ldots,B_r \in J$.
Since \cite[Lemma~1]{Kovacic1969} can be rewritten in a straightforward way for several commuting derivations, the surjectivity of  $\ell$ is implied by 
\begin{equation}\label{eq:det}p \notin [D_1(x)-xB_1,\ldots,D_r(x)-xB_r]
\end{equation} 
for any order $0$ non-zero differential polynomial $p$ in $x$ (which includes $p=\det x$), as in the proof \cite[Proposition~6]{Kovacic1969}.
Since \eqref{eq:det} is shown in the proof of \cite[Proposition~14, p.~26]{Kolchin:differentialalgebraicgroups} given the conditions in~\eqref{eq:J}, we conclude that $\ell$ is surjective. Hence, the sequence
$$
\begin{CD}
\{1\}@>>> G@>>> H@>\ell>> J@>>> \{0\}
\end{CD}
$$
is exact. Since $K$ is algebraically closed, $H^1_\Delta(K,H)=\{1\}$. By \cite[p.~192, Proposition 8]{Kolchin:differentialalgebraicgroups}, this implies that all torsors for $G$ are isomorphic to $\ell^{-1}(a)$, $a\in J(K)$. Moreover, all of them are isomorphic (that is, $H^1_{\Delta}(K,G)=\{1\}$) if $\ell$ is surjective on $K$-points. 

Due to the PV closedness, $H^1_{\Delta}(K,\GL_n^P)=\{1\}$. 
Since $H$ is defined over $\mathbb{Q}$, taking $P$-constants, which can be viewed as applying the functor $\Hom(\,\cdot\,,\U^P)$, is exact, because $\U^P$ is algebraically closed and so the polynomial map $\pi$ is surjective:
$$
\begin{CD}
\{1\}@>>>G=H^P@>\iota>>\GL_n^P@>\pi>>(\GL_n/H)^P@>>> \{1\},
\end{CD}
$$
Since $K$ is algebraically closed, the map $\pi$ is surjective on $K$-points. Therefore, by the corresponding exact sequence of cohomologies \cite[p. 170, Theorem 2]{Kolchin:differentialalgebraicgroups}, $H^1_\Delta(K,G)=H^1_\Delta(K,\GL_n^{P})=\{1\}$. 
\end{proof}

\section*{Acknowledgments}
The authors are grateful to Anand Pillay and Michael F. Singer for the discussions and comments. This work has been partially supported by the NSF grants CCF-0952591 and DMS-1413859, and   by the Austrian Science Foundation FWF, grant P28079.
\bibliographystyle{abbrvnat}
\bibliography{bibdata}
\end{document}